\theoremstyle{plain}
\newtheorem{Thm}{Theorem}
\newtheorem{Prop}[Thm]{Proposition}
\newtheorem{Lem}[Thm]{Lemma}
\theoremstyle{definition}
\newtheorem{Rem}[Thm]{Remark}
\numberwithin{equation}{section}
\title{Derived categories of toric varieties II}
\author{Yujiro Kawamata}
\begin{document}

\maketitle

\begin{abstract}
We prove two theorems on the derived categories of toric varieties, 
the existence of an exceptional collection consisting of sheaves 
for a divisorial extraction
and the finiteness of Fourier-Mukai partners.
\end{abstract}

\section{Introduction}

This paper supplements the first part of the series \cite{toric}, where 
we considered the semi-orthogonal 
decompositions of the derived categories with respect to the toric 
Minimal Model Program.
We proved that the semi-orthogonal complements for divisorial contractions and 
flips have full exceptional collections consisting of sheaves.

Akira Ishii and Kazushi Ueda pointed out that the divisorial extractions, 
another important class of 
birational transformations, were not yet treated.
Namely we did not consider toric birational morphisms $f: X \to Y$ 
between $\mathbf{Q}$-factorial projective toric varieties 
whose exceptional locus is a prime divisor $E$ such that $K_X+eE = f^*K_Y$ 
with $e > 0$.
We note that, if $e < 0$, then $f$ is a divisorial contraction 
which is already treated in \cite{toric}, and if $e = 0$, then $f$ is a
log crepant morphism and we have a derived equivalence (\cite{log crepant}). 
So we consider this case in \S 1, and prove that the semi-orthogonal 
complement has again a full exceptional collection consisting of sheaves.
It is rather remarkable because the directions are opposite for 
the fully faithful embedding functors between derived 
categories in the cases $e > 0$ and $e < 0$.

We also correct certain notation in \cite{toric}~\S 5 (paragraph before 
Remark 5.1)
according to a remark of Akira Ishii and Kazushi Ueda.
Namely we write $j_{1*}j_2^*$ instead of $j^*$ because there is no morphism 
of stacks over a morphism of schemes $D \to X$, 
but we have a flip-like diagram 
$\mathcal{D} \leftarrow \tilde{\mathcal{D}} \rightarrow \mathcal{X}$
(paragraphs between Lemmas 2 and 3).

In \S 2, we answer a question by Shinnosuke Okawa raised at 
Chulalongkorn University conference.
We prove that the number of Fourier-Mukai partners of a 
$\mathbf{Q}$-factorial projective toric variety is finite, confirming a 
conjecture which is related to the finiteness conjecture of the minimal models 
(\cite{DK}).

The author would like to thank Akira Ishii, Kazushi Ueda and Shinnosuke Okawa.
The author would also like to thank the referee for careful reading.

\section{Divisorial extraction}

We proved in \cite{log crepant} that the Minimal Model Program in the 
category of toroidal varieties yields semi-orthogonal decompositions 
of derived categories.
In \cite{toric} we proved in the toric case 
that the semi-orthogonal complements have
full exceptional collections consisting of sheaves.
We extend this result to another case of important toric morphism, a toric
divisorial extraction.

\begin{Thm}
Let $\phi: X \to Y$ be a birational morphism between $\mathbf{Q}$-factorial 
projective 
toric varieties such that the exceptional locus is a prime divisor denoted by 
$D$.
Let $B$ be an effective torus-invariant $\mathbf{Q}$-divisor on $X$ 
whose coefficients belong to a set 
\[
\{\frac{r-1}r \mid r \in \mathbf{Z}_{>0}\}
\]
and $C = \phi_*B$.
Let $\pi_X: \mathcal{X} \to X$ and $\pi_Y: \mathcal{Y} \to Y$ 
be the natural morphisms from smooth Deligne-Mumford stacks corresponding 
to the pairs $(X,B)$ and $(Y,C)$ respectively.
Assume that 
\[
K_X+B<\phi^*(K_Y+C).
\] 
Then there is a fully faithful triangulated functor 
\[
\Phi: D^b(\text{Coh}(\mathcal{X})) \to D^b(\text{Coh}(\mathcal{Y}))
\]
such that the semi-orthogonal complement of the image 
$\Phi(D^b(\text{Coh}(\mathcal{X})))^{\perp}$ has a full exceptional collection
consisting of sheaves.
\end{Thm}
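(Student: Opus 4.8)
The plan is to reduce everything to the combinatorics of the two stacky fans and then to confine the effect of the extraction to a finite, explicitly computable piece. First I would record the toric picture: $\mathcal{X}$ and $\mathcal{Y}$ are smooth projective toric Deligne--Mumford stacks, and $\phi$ corresponds to the star subdivision of the fan $\Sigma_Y$ obtained by inserting the single new ray $\mathbf{R}_{\ge 0}v_D$ attached to $D$. Writing $v_D=\sum_i a_i v_i$ in the minimal cone of $\Sigma_Y$ containing it, and letting $b_i$ (resp.\ $b_D$) be the coefficient of $B$ along the corresponding divisor, the hypothesis $K_X+B<\phi^*(K_Y+C)$ translates, since $\phi$ is an isomorphism away from $D$, into the single strict inequality $\sum_i a_i(1-b_i)<1-b_D$, the negativity of the log discrepancy of $D$. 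This inequality is the engine behind every vanishing used below, and its strictness (as opposed to equality, which gives the log crepant case of \cite{log crepant}) is exactly what forces a nontrivial complement.

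Because $C=\phi_*B$ forgets the exceptional component and $\phi$ contracts $D$ to a center $Z=\phi(D)$ of codimension $\ge 2$, there is in general no morphism of stacks $\mathcal{X}\to\mathcal{Y}$ --- the phenomenon flagged for \cite{toric}\,\S5. I would therefore build $\Phi$ from a correspondence rather than a pullback: take a flip-like roof $\mathcal{X}\xleftarrow{p}\tilde{\mathcal{W}}\xrightarrow{q}\mathcal{Y}$ resolving the indeterminacy over $Z$ (the stacky analogue of the diagram $\mathcal{D}\leftarrow\tilde{\mathcal{D}}\to\mathcal{X}$), and set $\Phi=Rq_*\circ(-\otimes L)\circ Lp^*$ for a line bundle $L$ on $\tilde{\mathcal{W}}$ pinned down by the boundary data. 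Full faithfulness is then the statement that, on the generating toric line bundles of $\mathcal{X}$, the natural map $\mathrm{Hom}^{\bullet}_{\mathcal{X}}(M,M')\to\mathrm{Hom}^{\bullet}_{\mathcal{Y}}(\Phi M,\Phi M')$ is an isomorphism; via the projection formula and toric Serre duality this reduces to a relative vanishing $R^{>0}q_*(\cdots)=0$ over $Z$, which holds precisely because $\sum_i a_i(1-b_i)<1-b_D$.

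Next I would identify the complement. Since $\phi$ is an isomorphism away from $Z$, any object orthogonal to $\Phi(D^b(\mathrm{Coh}(\mathcal{X})))$ is supported on the closed substack of $\mathcal{Y}$ lying over $Z$, and I expect it to be generated by the push-forwards to $\mathcal{Y}$ of an explicit finite family of line bundles on the stack over the exceptional divisor --- the weighted, stacky analogue of the objects $\mathcal{O}_E(-1),\dots,\mathcal{O}_E(-(c-1))$ in Orlov's blow-up formula. Their number is the count of characters in a bounded region of the character lattice cut out by $v_D$ and the $a_i$, and each is a genuine sheaf by the toric geometry of the center. I would then check that they form an exceptional collection, that they are semi-orthogonal to $\Phi(D^b(\mathrm{Coh}(\mathcal{X})))$ on the appropriate side, and that the two pieces together generate $D^b(\mathrm{Coh}(\mathcal{Y}))$; all three are combinatorial cohomology-vanishing computations on the fans, again driven by the key inequality, with generation following by induction showing that every generating line bundle of $\mathcal{Y}$ lies in $\langle\Phi(D^b(\mathrm{Coh}(\mathcal{X}))),\mathcal{B}_1,\dots,\mathcal{B}_l\rangle$.

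The main obstacle is the construction and full faithfulness of $\Phi$ in the \emph{reversed} direction. In the divisorial contraction case the fully faithful functor runs $D^b(\mathrm{Coh}(\mathcal{Y}))\to D^b(\mathrm{Coh}(\mathcal{X}))$ and its full faithfulness is comparatively formal, resting on $R\phi_*\mathcal{O}=\mathcal{O}$; here the functor runs the opposite way, $\mathcal{X}\to\mathcal{Y}$, no such pullback of stacks is available, and full faithfulness is not formal. It is exactly the place where the strict discrepancy inequality must be spent, through a Kawamata--Viehweg-type relative vanishing on the roof. Arranging this vanishing to hold uniformly in the weighted and stacky data, and at the same time to be sharp enough that the leftover cohomology assembles into the finitely many exceptional sheaves of the complement and no more, is the delicate heart of the argument.
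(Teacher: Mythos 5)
Your overall architecture agrees with the paper's up to the point where the complement is described: the paper also builds $\Phi$ from a roof (the normalized fiber product $\mathcal{W}=(\mathcal{X}\times_Y\mathcal{Y})\tilde{}$, with $\Phi=\nu_*\mu^*$ and no twisting line bundle), full faithfulness is exactly the vanishing statement driven by the strict discrepancy inequality (quoted from \cite{log crepant}~Theorem~4.2~(4) and reproved as Lemma~\ref{2}), and the reversal of direction you single out is indeed the point of the theorem. The genuine gap is in your description of the semiorthogonal complement. The center $F=\phi(D)$ is positive-dimensional in general (the restriction $\bar{\phi}\colon D\to F$ is a Mori fiber space; in the paper's local coordinates $\dim F=n-\alpha$), so the complement is \emph{not} generated by a finite family of exceptional line bundles analogous to $\mathcal{O}_E(-1),\dots,\mathcal{O}_E(-(c-1))$; that picture is only the point-center case of Orlov's formula. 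What actually happens (Proposition~\ref{5}) is that the complement decomposes into finitely many copies of the \emph{whole} derived category of the stacky center,
\[
D^b(\text{Coh}(\mathcal{F}))_k = j_{F,2*}j_{F,1}^*D^b(\text{Coh}(\mathcal{F}))
\otimes \mathcal{O}_{\mathcal{Y}}(\sum_{i=1}^{\alpha} k_i\mathcal{E}_i),
\]
indexed by the lattice points $k$ of your bounded region, where $\mathcal{F}$ carries an induced boundary $\bar C$ and is reached from $\mathcal{Y}$ only through a second roof $\mathcal{F}\leftarrow\tilde{\mathcal{F}}\rightarrow\mathcal{Y}$ (there is no morphism of stacks $\mathcal{F}\to\mathcal{Y}$, the same phenomenon you noted for $\mathcal{X}$). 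Full faithfulness of each such embedding and the mutual semiorthogonality of the copies are themselves nontrivial steps, proved via a vanishing criterion (Lemma~\ref{3}) and a Koszul resolution of $j_{F,2*}j_{F,1}^*\mathcal{O}_{\mathcal{F}}$.

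Consequently your plan to ``check that they form an exceptional collection,'' where ``they'' is a finite list of line-bundle pushforwards, would fail whenever $\dim F>0$: pushforwards of line bundles from $\mathcal{F}$ do generate the complement, but they cannot in general be ordered into an exceptional collection --- this is precisely why the collection in \cite{toric} consists of sheaves (pushforwards from closed substacks) rather than line bundles. The missing idea is recursion: once the complement is identified with finitely many copies of $D^b(\text{Coh}(\mathcal{F}))$, one invokes \cite{toric}~Theorem~1.1 for the lower-dimensional toric stack $\mathcal{F}$ to supply the full exceptional collection of sheaves inside each copy; the theorem is Proposition~\ref{5} plus that citation. A secondary gap: you deduce generation from the claim that anything orthogonal to $\Phi(D^b(\text{Coh}(\mathcal{X})))$ is supported over the center, which you do not prove and which by itself does not identify the complement with the given pieces; the paper instead proves generation directly, by induction on the twisting integers, using the Koszul complex of the $\mathcal{E}_i$, $1\le i\le\alpha$, to bridge the interval of twists realized by $\Phi$ (Lemma~\ref{2}) and the interval realized by the pieces $D^b(\text{Coh}(\mathcal{F}))_k$.
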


\begin{proof}
This is the case which was not considered in the previous paper 
\cite{toric}, where we considered only the case $K_X+B \ge \phi^*(K_Y+C)$.
The proof is very similar, but we need additional calculations.

We use the notation of \cite{toric} whenever it is possible.
Especially the morphism $\phi: X \to Y$ is controlled by an equation 
\[
a_1v_1 + \cdots + a_{n+1}v_{n+1}=0
\]
locally over $Y$, where the $v_i$ are primitive vertices of cones in a
decomposition of a cone corresponding 
to a toric affine open subset $U$ of $Y$, 
and the coefficients of the equation $a_i$ are coprime integers.
Since $\phi$ is a divisorial contraction, 
we have $a_i \ge 0$ except for $i=n+1$.
We assume that $a_i > 0$ for $1 \le i \le \alpha$, $a_i=0$ 
for $\alpha < i \le n$, and $a_{n+1}<0$.

Let $D_i$ be the prime divisors on $X$ corresponding to the vertices $v_i$,
and $E_i = \phi(D_i)$ for $1 \le i \le n$ their images on $Y$.
$D=D_{n+1}$ is the exceptional divisor of $\phi$, and the restricted morphism 
$\bar{\phi}: D \to F = \phi(D)$ is a toric Mori fiber space.
Let $\mathcal{D}_i$ and $\mathcal{E}_i$ respectively be the prime divisors on 
$\mathcal{X}$ and $\mathcal{Y}$ above $D_i$ and $E_i$.

We write $B\vert_{\phi^{-1}(U)} =\sum_{i=1}^{n+1}\frac{r_i-1}{r_i}D_i$, 
and $C\vert_U = \sum_{i=1}^n\frac{r_i-1}{r_i}E_i$.
The assumption that $K_X+B$ is positive for $\phi$ is expressed by 
the following inequality
\[
\sum_{i=1}^{n+1} \frac{a_i}{r_i} < 0.
\]

The following Lemma~\ref{2} is \cite{log crepant} Theorem 4.2~(4):

\begin{Lem}\label{2}
Let $\mathcal{W}=(\mathcal{X} \times_Y \mathcal{Y})\tilde{}$ be 
the normalized fiber product, and let $\mu$ and $\nu$ be the projections.
Then the functor 
$\Phi = \nu_*\mu^*: D^b(\text{Coh}(\mathcal{X})) 
\to D^b(\text{Coh}(\mathcal{Y}))$ is fully faithful.
Moreover, the invertible sheaves $\mathcal{O}_{\mathcal{X}}
(\sum_{i=1}^{n+1} k_i \mathcal{D}_i)$ for the sequences of integers 
$k=(k_1,\dots,k_{n+1})$ such that 
\[
0 \le - \sum_{i=1}^{n+1} \frac{a_ik_i}{r_i} 
< \sum_{i=1}^{\alpha} \frac{a_i}{r_i}
\]
span the triangulated category $D^b(\text{Coh}(\mathcal{X}))$, and 
\[
\Phi(\mathcal{O}_{\mathcal{X}}(\sum_{i=1}^{n+1} k_i \mathcal{D}_i)) \cong
\mathcal{O}_{\mathcal{Y}}(\sum_{i=1}^n k_i \mathcal{E}_i)
\]
for such sequences of integers.
\end{Lem}

\begin{proof}
We recall the proof for the convenience of the reader.
We have
\[
\sum_{i=1}^{n+1} k_i \mu^*\mathcal{D}_i
\equiv \sum_{i=1}^n k_i \nu^*\mathcal{E}_i
+ \frac{r_{n+1}}{a_{n+1}} \sum_{i=1}^{n+1} \frac{a_ik_i}{r_i} 
\mu^*\mathcal{D}_{n+1}
\]
and 
\[
K_X+\sum_{i=1}^n \frac{r_i-1}{r_i}D_i + D_{n+1}
= \phi^*(K_Y+\sum_{i=1}^n \frac{r_i-1}{r_i}E_i) 
- \sum_{i=1}^n \frac{a_i}{r_i}\frac{1}{a_{n+1}}D_{n+1}.
\]
Since 
\[
\frac{r_{n+1}}{a_{n+1}} \sum_{i=1}^{n+1} \frac{a_ik_i}{r_i}
< - \frac{r_{n+1}}{a_{n+1}} \sum_{i=1}^n \frac{a_i}{r_i}
\]
we have 
\[
R^q\nu_*\mu^*\mathcal{O}_{\mathcal{X}}(\sum_{i=1}^{n+1} k_i \mathcal{D}_i)
\cong 0
\]
for $q > 0$. 
Moreover since 
\[
\frac{r_{n+1}}{a_{n+1}} \sum_{i=1}^{n+1} \frac{a_ik_i}{r_i} \ge 0
\]
we have 
\[
\nu_*\mu^*\mathcal{O}_{\mathcal{X}}(\sum_{i=1}^{n+1} k_i \mathcal{D}_i)
\cong \mathcal{O}_{\mathcal{Y}}(\sum_{i=1}^n k_i \mathcal{E}_i).
\]

For another such sequence $k'=(k'_1,\dots,k'_{n+1})$, we have
\[
\frac{a_{n+1}}{r_{n+1}} < - \sum_{i=1}^n \frac{a_i}{r_i}
< - \sum_{i=1}^{n+1} \frac{a_i(k_i-k'_i)}{r_i}
< \sum_{i=1}^n \frac{a_i}{r_i}
\]
hence
\[
R^q\nu_*\mu^*\mathcal{O}_{\mathcal{X}}(\sum_{i=1}^{n+1} (k_i-k'_i) 
\mathcal{D}_i) \cong 0
\]
for $q > 0$ and 
\[
\nu_*\mu^*\mathcal{O}_{\mathcal{X}}(\sum_{i=1}^{n+1} (k_i-k'_i) 
\mathcal{D}_i)
\cong \mathcal{O}_{\mathcal{Y}}(\sum_{i=1}^n (k_i-k'_i) 
\mathcal{E}_i).
\]
Thus the natural homomorphism
\[
\text{Hom}(L',L) \to \text{Hom}(\Phi(L'), \Phi(L'))
\]
is bijective for $L=\mathcal{O}_{\mathcal{X}}(\sum_{i=1}^{n+1} k_i 
\mathcal{D}_i)$ and $L'=\mathcal{O}_{\mathcal{X}}(\sum_{i=1}^{n+1} k'_i 
\mathcal{D}_i)$. 

Since $\bigcup_{i=1}^{\alpha} \mathcal{D}_i = \emptyset$, 
the following Koszul complex is exact:
\[
0 \to \mathcal{O}_{\mathcal{X}}(-\sum_{i=1}^{\alpha} \mathcal{D}_i) 
\to \dots \to \sum_{i=1}^{\alpha} \mathcal{O}_{\mathcal{X}}(-\mathcal{D}_i)
\to \mathcal{O}_{\mathcal{X}} \to 0.
\]
Therefore the sheaves $\mathcal{O}_{\mathcal{X}}(\sum_{i=1}^{n+1} k_i 
\mathcal{D}_i)$ whose coefficients $k_i$ 
satisfy the assumption of the lemma generate the same 
category as those whose coefficients are general.
\end{proof}

We shall construct a full exceptional collection on the semiorthogonal 
complement of the image of $\Phi$.

We define a $\mathbf{Q}$-divisor 
$\bar B = \sum_{i=1}^n \frac{r_it_i-1}{r_it_i}\bar D_i$ for 
$\bar D_i=D_i \cap D$ similarly as in \cite{toric}~\S 5.
Namely, we write 
\[
v_i \equiv t_i\bar v_i
\]
in $N_D=N_X/\mathbf{Z}v_{n+1}$ 
for primitive vectors $\bar v_i \in N_D$, and $a_it_i=t\bar a_i$ such that the 
$\bar a_i$ for $1 \le i \le \alpha$ are coprime integers, where $N_X$ and 
$N_D$ are lattices corresponding to the toric varieties $X$ and $D$ 
respectively.
Here we note that 
\[
M_D = \{m \in M_X \mid \langle m, v_{n+1} \rangle = 0\}
\]
for the dual lattices $M_X$ and $M_D$ of $N_X$ and $N_D$ respectively.

The toric morphism $\bar{\phi}: D \to F$ is controlled locally over $F$ 
by an equation 
\[
\sum_{i=1}^n \bar a_i\bar v_i=0
\]
where we have $\bar a_i=0$ for $\alpha < i \le n$. 
We have $D_i \vert_D = \frac 1{t_i} \bar D_i$ as $\mathbf{Q}$-Cartier divisors.

Let $\pi_D: \mathcal{D} \to D$ be the natural morphism from a smooth 
Deligne-Mumford stack corresponding to the pair $(D,\bar B)$.
Let $\tilde{\mathcal{D}} = (\mathcal{X} \times_X D)\tilde{}$ be the normalized
fiber product.
Then there are natural morphisms $j_1: \tilde{\mathcal{D}} \to \mathcal{D}$
and $j_2: \tilde{\mathcal{D}} \to \mathcal{X}$ satisfying
\[
j_{1*}j_2^*\mathcal{O}_{\mathcal{X}}(\mathcal{D}_i) 
\cong \mathcal{O}_{\mathcal{D}}(\bar{\mathcal{D}}_i)
\]
for $i = 1,\dots,n$, 
where $\mathcal{D}_i$ and $\bar{\mathcal{D}}_i$ are prime divisors on 
$\mathcal{X}$ and $\mathcal{D}$ corresponding to $D_i$ and $\bar D_i$
respectively. 

Next we define a $\mathbf{Q}$-divisor $\bar C$ on $F$ by 
$\bar C= \sum_{i=\alpha+1}^n \frac{r_is_it_i-1}{r_is_it_i}\bar E_i$ for 
$\bar E_i=E_i \cap F$ similarly as in \cite{toric}~\S4.
Namely, we write 
\[
\bar v_i \equiv s_i\tilde v_i
\]
for primitive vectors 
$\tilde v_i$ in the lattice 
\[
N_F = N_D/(\sum_{i=1}^{\alpha} \mathbf{R}v_i \cap N_D)
\]
corresponding to $F$.
Here we note that 
\[
M_F = \{m \in M_F \mid \langle m, v_i \rangle = 0, i=1,\dots,\alpha \}
\]
for the dual lattices $M_F$ of $N_F$.

Let $\pi_F: \mathcal{F} \to F$ be the natural morphism from a smooth 
Deligne-Mumford stack corresponding to the pair $(F,\bar C)$.
Let $\tilde{\mathcal{F}} = (\mathcal{Y} \times_Y F)\tilde{}$ be the normalized
fiber product.
Then there are natural morphisms $j_{F,1}: \tilde{\mathcal{F}} \to \mathcal{F}$
and $j_{F,2}: \tilde{\mathcal{F}} \to \mathcal{Y}$ satisfying
\[
j_{F,1*}j_{F,2}^*\mathcal{O}_{\mathcal{Y}}(\mathcal{E}_i) 
\cong \mathcal{O}_{\mathcal{F}}(\bar{\mathcal{E}}_i)
\]
for $i = \alpha+1,\dots,n$, 
where $\mathcal{E}_i$ and $\bar{\mathcal{E}}_i$ are prime divisors on 
$\mathcal{Y}$ and $\mathcal{F}$ corresponding to $E_i$ and $\bar E_i$
respectively.
Indeed we have an equality $E_i \vert_F = \frac 1{s_it_i} \bar E_i$
for $\alpha < i \le n$, 
which is confirmed by the following equalities $\phi^*E_i=D_i$, 
$D_i \vert_D=\frac 1{t_i}\bar D_i$ and $\bar{\phi}^*\bar E_i = s_i\bar D_i$.

We have an induced morphism $\bar{\psi}: \mathcal{D} \to \mathcal{F}$. 
Since $\bar{\psi}$ is smooth by \cite{toric}~Corollary~4.2, we have
\[
\bar{\psi}^*\mathcal{O}_{\mathcal{F}}(\bar{\mathcal{E}}_i) 
\cong \mathcal{O}_{\mathcal{D}}(\bar{\mathcal{D}}_i)
\]
for $\alpha < i \le n$.

\begin{Lem}\label{3}
Let $k_1, \dots, k_n$ be integers, and define $k_{n+1}$ by
an equation
\[
\sum_{i=1}^{n+1} \frac{a_ik_i}{r_i} = 0.
\]
If $k_{n+1}$ is not an integer which is divisible by $r_{n+1}$, then 
\[
j_{F,1*}j_{F,2}^*\mathcal{O}_{\mathcal{Y}}
(\sum_{i=1}^{\alpha}k_i\mathcal{E}_i) \cong 0.
\]
\end{Lem}

\begin{proof}
Since $j_{F,2}^*\mathcal{O}_{\mathcal{Y}}
(\sum_{i=1}^{\alpha}k_i\mathcal{E}_i)$ is an invertible sheaf, 
its direct image sheaf is either an invertible sheaf or a zero sheaf on 
$\mathcal{F}$.
If it is an invertible sheaf, then its pull-back 
\[
\bar{\psi}^*j_{F,1*}j_{F,2}^*\mathcal{O}_{\mathcal{Y}}
(\sum_{i=1}^{\alpha}k_i\mathcal{E}_i)
\]
is also an invertible sheaf, which should be of the form
\[
j_{1*}j_2^*\mathcal{O}_{\mathcal{X}}(\sum_{i=1}^{\alpha}k_i\mathcal{D}_i
+k_{n+1}\mathcal{D}_{n+1})
\]
for an integer $k_{n+1}$ such that
$\sum_{i=1}^{n+1} \frac{a_ik_i}{r_i} = 0$.
Since it is non-zero if and only if $k_{n+1}$ is divisible by 
$r_{n+1}$, we conclude our proof.
\end{proof}

Our theorem is a consequence of the following Proposition~\ref{5} 
combined with \cite{toric}~Theorem~1.1. 

\begin{Prop}\label{5}
(1) The functor 
\[
j_{F,2*}j_{F,1}^*: D^b(\text{Coh}(\mathcal{F})) 
\to D^b(\text{Coh}(\mathcal{Y}))
\]
is fully faithful.

\vskip .5pc 

Let $D^b(\text{Coh}(\mathcal{F}))_k$ denote the full subcategory of 
$D^b(\text{Coh}(\mathcal{Y}))$ defined by
\[
D^b(\text{Coh}(\mathcal{F}))_k = j_{F,2*}j_{F,1}^*D^b(\text{Coh}(\mathcal{F})) 
\otimes \mathcal{O}_{\mathcal{Y}}(\sum_{i=1}^{\alpha} k_i\mathcal{E}_i)
\]
for a sequence of integers $k=(k_1,\dots,k_{\alpha})$.
We set $k_{\alpha+1}=\dots=k_n=0$ when necessary.

\vskip .5pc 

(2) If 
\[
0 < \sum_{i=1}^{n+1} \frac{a_ik_i}{r_i} 
\le -\sum_{i=1}^{n+1} \frac{a_i}{r_i}
\]
for some integer $k_{n+1}$, then 
\[
\text{Hom}^q(\Phi(D^b(\text{Coh}(\mathcal{X}))), 
D^b(\text{Coh}(\mathcal{F}))_k) = 0
\]
for all $q$.

\vskip .5pc 

(3) Let $k'=(k'_1,\dots,k'_{\alpha})$ be another sequence of integers such that
\[
0 < \sum_{i=1}^{n+1} \frac{a_i(k'_i-k_i)}{r_i} 
< - \sum_{i=1}^{n+1} \frac{a_i}{r_i}
\]
for some integers $k_{n+1}$ and $k'_{n+1}$.
Then 
\[
\text{Hom}^q(D^b(\text{Coh}(\mathcal{F}))_k, 
D^b(\text{Coh}(\mathcal{F}))_{k'}) = 0
\]
for all $q$.

\vskip .5pc 

(4) The subcategories $\Phi(D^b(\text{Coh}(\mathcal{X})))$ and 
the $D^b(\text{Coh}(\mathcal{F}))_k$ for 
\[
0 < \sum_{i=1}^{n+1} \frac{a_ik_i}{r_i} 
\le -\sum_{i=1}^{n+1} \frac{a_i}{r_i}
\]
for some integers $k_{n+1}$ 
generate $D^b(\text{Coh}(\mathcal{Y}))$ as a triangulated category.
\end{Prop}

\begin{proof}
(1) We shall prove that the natural homomorphisms
\[
\text{Hom}^q(L,L') \to \text{Hom}^q(j_{F,2*}j_{F,1}^*L,j_{F,2*}j_{F,1}^*L')
\cong \text{Hom}^q(j_{F,1*}j_{F,2}^*j_{F,2*}j_{F,1}^*L,L')
\]
are bijective for all $q$ and for all invertible sheaves 
$L$ and $L'$ on $\mathcal{F}$.

Let $V = \sum_{i=1}^{\alpha} \mathcal{O}_{\mathcal{Y}}(-\mathcal{E}_i)$.
Then there is a Koszul resolution of 
$j_{F,2*}j_{F,1}^*\mathcal{O}_{\mathcal{F}}$:
\[
0 \to \bigwedge^{\alpha}V \to \dots \to V \to 
\mathcal{O}_{\mathcal{Y}} \to j_{F,2*}j_{F,1}^*\mathcal{O}_{\mathcal{F}} \to 0.
\]
Thus 
\[
H_q(j_{F,1*}j_{F,2}^*j_{F,2*}j_{F,1}^*\mathcal{O}_{\mathcal{F}}) 
\cong \bigoplus_{\# I = q} j_{F,1*}j_{F,2}^*
\mathcal{O}_{\mathcal{Y}}(-\sum_{i \in I}\mathcal{E}_i)
\]
where the $I$ run all the subsets of $\{1,\dots,\alpha\}$ such that 
$\# I = q$.
Since $\sum_{i=1}^{n+1} \frac{a_i}{r_i} < 0$, we have 
\[
0 < \sum_{i \in I} \frac{a_i}{r_i} < \frac{\vert a_{n+1}\vert}{r_{n+1}}
\] 
for $q \ne 0$.
Therefore we have 
\[
H_q(j_{F,1*}j_{F,2}^*j_{F,2*}j_{F,1}^*\mathcal{O}_{\mathcal{F}}) \cong 0
\]
for such $q$ by Lemma~\ref{3}.
By the projection formula, we conclude that
\[
j_{F,1*}j_{F,2}^*j_{F,2*}j_{F,1}^*L \cong L
\]
hence the assertion.

\vskip 1pc

(2) By Lemma~\ref{2} or \cite{log crepant} Theorem 4.2~(4), 
$\Phi(D^b(\text{Coh}(\mathcal{X}))$
is spanned by invertible sheaves
$\mathcal{O}_{\mathcal{Y}}(\sum_{i=1}^n l_i\mathcal{E}_i)$
for
\[
0 \le - \sum_{i=1}^{n+1} \frac{a_il_i}{r_i} 
< \sum_{i=1}^{\alpha} \frac{a_i}{r_i}
\]
where $l_{n+1}$ are some integers.
By adding the inequalities, we obtain
\[
0 < - \sum_{i=1}^{n+1} \frac{a_i(l_i-k_i)}{r_i} < - \frac{a_{n+1}}{r_{n+1}}.
\]
Therefore for any invertible sheaf $L$ on $\mathcal{F}$, we have
\[
\begin{split}
&\text{Hom}^q
(\mathcal{O}_{\mathcal{Y}}(\sum_{i=1}^n l_i\mathcal{E}_i),
j_{F,2*}j_{F,1}^*L \otimes 
\mathcal{O}_{\mathcal{Y}}(\sum_{i=1}^{\alpha} k_i\mathcal{E}_i)) \\
&\cong \text{Hom}^q
(j_{F,1*}j_{F,2}^*
\mathcal{O}_{\mathcal{Y}}(\sum_{i=1}^n (l_i-k_i)\mathcal{E}_i),L) 
\cong 0
\end{split}
\]
for all $q$ by Lemma~\ref{3}.

\vskip 1pc

(3) For any fixed subset $I \subset \{1,\dots,\alpha\}$, we set
$\epsilon_i = 1$ or $0$ according to whethre $i \in I$ or not.
Then we have
\[
0 > \sum_{i=1}^{n+1} \frac{a_i(k_i-k'_i-\epsilon_i)}{r_i} 
> \frac{a_{n+1}}{r_{n+1}}.
\]
Therefore for any invertible sheaves $L$ and $L'$ on $\mathcal{F}$, we have
\[
\begin{split}
&\text{Hom}^q
(j_{F,2*}j_{F,1}^*L \otimes 
\mathcal{O}_{\mathcal{Y}}(\sum_{i=1}^{\alpha} k_i\mathcal{E}_i),
j_{F,2*}j_{F,1}^*L' \otimes 
\mathcal{O}_{\mathcal{Y}}(\sum_{i=1}^{\alpha} k'_i\mathcal{E}_i)) 
\\
&\text{Hom}^q
(j_{F,1*}j_{F,2}^*j_{F,2*}j_{F,1}^*L \otimes \mathcal{O}_{\mathcal{Y}}
(\sum_{i=1}^{\alpha} (k_i-k'_i)\mathcal{E}_i),L') 
\cong 0
\end{split}
\]
for all $q$ by Lemma~\ref{3}.

\vskip 1pc

(4) Let $T$ be the triangulated subcategory of $D^b(\text{Coh}(\mathcal{Y}))$
generated by $\Phi(D^b(\text{Coh}(\mathcal{X})))$ and the 
$D^b(\text{Coh}(\mathcal{F}))_k$ for 
\[
0 < \sum_{i=1}^{n+1} \frac{a_ik_i}{r_i} 
\le -\sum_{i=1}^{n+1} \frac{a_i}{r_i}
\]
for some integers $k_{n+1}$.
We shall prove that $T$ contains all invertible sheaves of the form
\[
\mathcal{O}_{\mathcal{Y}}(\sum_{i=1}^n k_i \mathcal{E}_i).
\]

By Lemma~\ref{2}, $T$ contains invertible sheaves
$\mathcal{O}_{\mathcal{Y}}(\sum_{i=1}^n k_i \mathcal{E}_i)$ if
\[
- \sum_{i=1}^{\alpha} \frac{a_i}{r_i} < \sum_{i=1}^{n+1} \frac{a_ik_i}{r_i} 
\le 0
\]
for some integers $k_{n+1}$.
On the other hand, for an arbitrary sequence of integers $(k_1,\dots,k_n)$, 
there exists an integer $k_{n+1}$ such that 
\[
- \sum_{i=1}^{\alpha} \frac{a_i}{r_i} < 
\sum_{i=1}^{n+1} \frac{a_ik_i}{r_i} \le 
- \sum_{i=1}^{n+1} \frac{a_i}{r_i}.
\]
The difference of the above two intervals 
is covered by the consideration on the Koszul resolution of 
the sheaf $j_{F,2*}j_{F,1}^*\mathcal{O}_{\mathcal{F}}$.
Indeed assume that $k=(k_1,\dots,k_{n+1})$ is a sequence of integers such that
\[
0 < \sum_{i=1}^{n+1} \frac{a_ik_i}{r_i} \le 
- \sum_{i=1}^{n+1} \frac{a_i}{r_i}. 
\]
If $T$ contains all the invertible sheaves of the form 
\[
\mathcal{O}_{\mathcal{Y}}(\sum_{i=1}^n l_i \mathcal{E}_i)
\]
such that $l_i = k_i$ or $k_i - 1$, $l_{n+1}=k_{n+1}$ and 
that $\sum_{i=1}^n l_i < \sum_{i=1}^n k_i$, then $T$ also contains 
$\mathcal{O}_{\mathcal{Y}}(\sum_{i=1}^n k_i \mathcal{E}_i)$, 
because $D^b(\text{Coh}(\mathcal{F}))_k$ is contained in $T$.
Therefore by the induction on $\sum_{i=1}^{n+1} k_i$, we obtain the
assertion.
\end{proof}

If $k = (k_1,\dots,k_{n+1})$ and $k' = (k'_1, \dots, k'_{n+1})$ are
sequences of integers such that 
\[
\sum_{i=1}^{n+1} \frac{a_ik_i}{r_i} = 
\sum_{i=1}^{n+1} \frac{a_ik'_i}{r_i}
\]
then $j_{F,1*}j_{F,2}^*\mathcal{O}_{\mathcal{Y}}
(\sum_{i=1}^{\alpha} (k_i-k'_i)\mathcal{E}_i)$ is either an invertible sheaf 
on $\mathcal{F}$ or $0$.
In the former case we have 
$D^b(\text{Coh}(\mathcal{F}))_k = D^b(\text{Coh}(\mathcal{F}))_{k'}$, while 
in the latter case we have 
\[
\text{Hom}^q(D^b(\text{Coh}(\mathcal{F}))_k, 
D^b(\text{Coh}(\mathcal{F}))_{k'}) = 0
\]
for all $q$.
Therefore we obtain an exceptional collection of the semiorthogonal 
complement.
\end{proof}

%%%%%%%%%%%%%%%%%%%%%%%%%%%%%%%%%%%%%%%%%%%%%%

\section{Fourier-Mukai partners}

In the minimal model program, we conjectured that, given a smooth projective
variety, or more generally a smooth projective pair, there exist only 
finitely many minimal models, or log minimal models, up to isomorphisms which
are birationally equivalent to the given variety.
In the derived setting, we conjecture that, given a smooth projective
variety, there exist only finitely many smooth projective
varieties up to isomorphisms whose derived categories are equivalent to
the given one (\cite{DK}).
We also expect more generalized statement to hold for projective varieties with
only quotient singularities.
We add here one more example which confirms this conjecture.

\begin{Thm}
Let $X$ be a projective $\mathbf{Q}$-factorial toric variety and 
$Y$ a projective variety which has only quotient singularities.
Let $\mathcal{X}$ and $\mathcal{Y}$ be smooth Deligne-Mumford stacks 
associated to $X$ and $Y$ respectively.
Assume that there exists an equivalence of triangulated categories 
$\Phi: D^b(\mathcal{X}) \cong D^b(\mathcal{Y})$.
Then $Y$ is also a projective $\mathbf{Q}$-factorial toric variety, and 
the kernel object for $\Phi$ induces a toric birational map 
$\phi: X \dashrightarrow Y$. 
In particular, $Y$ has only abelian quotient singularities.
Moreover there exist only finitely many such birational maps when $X$ is fixed
and $Y$ is varied.
\end{Thm}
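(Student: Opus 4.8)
The plan is to recover the toric structure of $Y$ from categorical invariants of the equivalence $\Phi$, to read off the birational map from the kernel over the big torus, and to deduce finiteness from the finiteness of $\mathbf{Q}$-factorial models in a fixed movable cone. First I would record the numerical constraints imposed by $\Phi$. Since $\Phi$ is an equivalence between the derived categories of the smooth proper Deligne--Mumford stacks $\mathcal{X}$ and $\mathcal{Y}$, it commutes with the Serre functors, so $\omega_{\mathcal{X}}$ and $\omega_{\mathcal{Y}}$ correspond and $\dim \mathcal{X} = \dim \mathcal{Y} = n$. Moreover $\Phi$ induces an isomorphism of Hochschild homology, and by the Hochschild--Kostant--Rosenberg decomposition this transports the (orbifold) Hodge numbers of $\mathcal{X}$ to those of $\mathcal{Y}$. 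As $X$ is toric, its orbifold cohomology is of Hodge--Tate type, so $h^{p,q}(\mathcal{Y}) = 0$ for $p \neq q$; in particular $H^1(\mathcal{O}_{\mathcal{Y}}) = 0$, whence $\mathrm{Pic}^0(\mathcal{Y}) = 0$.

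Next I would transport the torus action. The big torus $T \cong (\mathbf{C}^*)^n$ acting on $X$ lifts to $\mathcal{X}$ and gives a connected subgroup $T \hookrightarrow \mathrm{Auteq}^0(D^b(\mathcal{X}))$ via pullback. Conjugation by $\Phi$ carries this into $\mathrm{Auteq}^0(D^b(\mathcal{Y}))$, which by Rouquier's description of the identity component equals $\mathrm{Aut}^0(\mathcal{Y}) \ltimes \mathrm{Pic}^0(\mathcal{Y})$. Since $\mathrm{Pic}^0(\mathcal{Y}) = 0$, the image lies in $\mathrm{Aut}^0(\mathcal{Y})$, so $T$ acts on $\mathcal{Y}$ and hence on $Y$. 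The action is faithful because $\Phi$ is an equivalence and the action on $\mathcal{X}$ is faithful, and $\dim T = n = \dim Y$, so $T$ has a dense orbit and $Y$ is a toric variety. Being toric with quotient singularities, $Y$ is simplicial, hence $\mathbf{Q}$-factorial, and its singularities are abelian quotient singularities.

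It then remains to identify the birational map and prove finiteness. Over the open orbit $T_X \subset X$ the stack $\mathcal{X}$ is just $T_X$, and likewise $\mathcal{Y}$ over the open orbit $T_Y \subset Y$; restricting the kernel $\mathcal{P}$ of $\Phi$ to these loci and using its $T$-equivariance identifies $T_X$ with $T_Y$, so the support of $\mathcal{P}$ contains the graph of a $T$-equivariant birational map $\phi: X \dashrightarrow Y$, which is therefore toric. Because $\Phi$ matches the Serre functors, $\phi$ is crepant, i.e. $K_X$ and $K_Y$ pull back to the same divisor on a common toric resolution, so $X$ and $Y$ are $K$-equivalent. For the finiteness I would invoke that $K$-equivalent $\mathbf{Q}$-factorial projective toric varieties birational to $X$ are isomorphic in codimension one up to finitely many crepant divisorial modifications, and that such models are parametrized by the chambers of the Mori chamber decomposition of a fixed rational polyhedral movable cone; as there are only finitely many chambers, there are only finitely many $Y$, and hence finitely many toric birational maps $\phi$ up to isomorphism, in accordance with \cite{DK}.

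The main obstacle I expect is the second step, namely guaranteeing that the transported torus lands in the automorphism part rather than the Picard part and descends to a faithful effective action on the coarse space $Y$. This is exactly where the Hodge--Tate vanishing $\mathrm{Pic}^0(\mathcal{Y}) = 0$ together with Rouquier's theorem is essential, and where one must check that the finite inertia of the stack does not obstruct the density of the $T$-orbit on $Y$.
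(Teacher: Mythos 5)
Your strategy (recover the torus action on $Y$ categorically, then read off the graph) is genuinely different from the paper's, but as written it has two gaps, and the second one can only be closed by importing precisely the idea your proposal omits.

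First, the step that makes $Y$ toric rests on ``Rouquier's description'' $\mathrm{Auteq}^0(D^b(\mathcal{Y})) \cong \mathrm{Aut}^0(\mathcal{Y}) \ltimes \mathrm{Pic}^0(\mathcal{Y})$ applied to the Deligne--Mumford stack $\mathcal{Y}$. Rouquier's theorem --- including the statements that $\mathrm{Auteq}$ is locally algebraic and that conjugation by an equivalence preserves identity components --- is proved for smooth projective \emph{varieties}; you are forced to apply it to the stack $\mathcal{Y}$, since a priori $Y$ only has quotient singularities, and no such result for smooth proper Deligne--Mumford stacks is available. Establishing it would be a substantial project, not a routine citation, so the linchpin of your ``$Y$ is toric'' argument is unproven. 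The paper never needs anything of this kind: it works with the kernel $E$ directly, and toricness of $Y$ falls out at the \emph{end}, from the Mori dream space structure --- after crepant toric divisorial extractions (which preserve the derived category by \cite{log crepant}) one may assume $\phi$ is surjective in codimension one, and then the paragraph after \cite{HK}~Corollary~2.4 says every such birational contraction of the toric Mori dream space $X$ is one of finitely many toric maps to toric targets, proving toricness of $Y$ and finiteness simultaneously.

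Second, even granting the transported action, your identification of the graph does not work as stated. The isomorphisms $\Phi \circ t^* \cong \rho(t)^* \circ \Phi$ give $(t \times \rho(t))^* E \cong E$ for each $t$, but an equivariant structure requires compatible choices (a cocycle condition), which you do not address; and, more seriously, invariance of $\mathrm{Supp}(E)$ under the free twisted-diagonal $T$-action only says that $\mathrm{Supp}(E) \cap (T_X \times T_Y)$ is the preimage of some closed subset $W$ of the quotient torus $(T_X \times T_Y)/T$. Nothing in your argument forces $W$ to be a single point rather than finite or positive-dimensional; equivariance plus orthogonality of skyscrapers is not enough (the Poincar\'e kernel on an abelian variety is equivariant for the analogous translation actions and has full support, and orthogonality holds there). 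What rules this out in the toric case is exactly the log Calabi--Yau argument that the paper uses and you skipped: with $B = X \setminus T$ one has $K_X + B \sim 0$, so the Serre-functor identity $E \otimes p_1^*\omega_{\mathcal{X}} \cong E \otimes p_2^*\omega_{\mathcal{Y}}$ yields $E_y \cong E_y \otimes \mathcal{O}_{\mathcal{X}}(-B)$ for a general point $y$, and the argument of \cite{DK}~Theorem~2.3~(2) (any positive-dimensional component of the support is complete, hence must meet $B$ since $T$ is affine, contradicting the iterated twist by $-mB$) forces $E_y$ to be a skyscraper at a point of $T$; distinct $y$ then give distinct points, and the support is a graph. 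You do state the Serre-functor compatibility, but you use it only to assert crepancy of $\phi$, which is meaningful only after the graph exists. Finally, your finiteness paragraph also needs the reduction to ``$\phi$ surjective in codimension one'' before Hu--Keel applies; the paper does real work here (constructing the crepant extraction $\alpha\colon X' \to X$ torically and invoking \cite{log crepant} to keep the derived equivalence), which your phrase ``up to finitely many crepant divisorial modifications'' compresses without proof.
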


\begin{proof} 
Let $E \in D^b(\mathcal{X} \times \mathcal{Y})$ be the kernel of $\Phi$, 
an object giving the equivalence $\Phi$ (\cite{equiv}).
Then we have an isomorphism 
\[
E \otimes p_1^*\omega_{\mathcal{X}} \cong E \otimes p_2^*\omega_{\mathcal{Y}}
\]
where $p_1$ and $p_2$ are projections.

Let $T$ be the torus contained in $X$ and $B = X \setminus T$.
We take a general point $y \in \mathcal{Y}$ such that the support of 
$E_y=\Psi(\mathcal{O}_y)$ contains a point in $T$, where 
$\Psi: D^b(\mathcal{Y}) \cong D^b(\mathcal{X})$ is the equivalence given by
$\Psi(a) = p_{1*}(p_2^*a \otimes E)$.
Since $K_X+B \sim 0$, we have an isomorphism 
\[
E_y \cong E_y \otimes \mathcal{O}_{\mathcal{X}}(-B).
\]  
It follows that the support of $E_y$ is a point, say $x \in T$, by the 
same argument as in the proof of \cite{DK}~Theorem~2.3~(2).

If we take another such point $y'$, then $E_{y'}$ is supported 
by a different point $x' \in T$, 
because we have $\text{Hom}(\mathcal{O}_y,\mathcal{O}_{y'}[p])=0$ for all $p$.
Therefore the support of $E$ gives a birational map 
$\phi: X \dashrightarrow Y$.
Let $Z \subset X \times Y$ be the graph of $\phi$.
By the first isomorphism, we have
$q_1^*K_X = q_2^*K_Y$, where $q_1: Z \to X$ and $q_2: Z \to Y$ 
are projections, 
i.e., $X$ and $Y$ are $K$-equivalent.

Let us consider the case where $\phi$ is not surjective in codimension $1$.
Let $D_Y$ be a prime divisor on $Y$ whose center on $X$ is not a divisor.
Since $X$ and $Y$ are $K$-equivalent, there exists a crepant 
divisorial extraction 
$\alpha: X' \to X$ whose exceptional divisor is the prime divisor $D_X$ 
corresponding to $D_Y$.
Indeed there exists a toric projective 
birational morphism $\beta: X'' \to X$ such that
all divisorial valuations whose log discrepancies are at most $1$ appear as
prime divisors on $X''$.
We note that $\beta$ is toric because it is obtained by blowing up 
a relative minimal model of $X$, which is toric, at centers which are 
also toric.
Then we can construct $\alpha$ by contracting all exceptional divisors except 
$D_X$.

Since $\alpha$ is a crepant toric morphism, we have an equivalence
$D^b(\mathcal{X}) \cong D^b(\mathcal{X'})$ for the smooth Deligne-Mumford 
stack $\mathcal{X'}$ associated to $X'$ (\cite{log crepant}).
By replacing $X$ by $X'$ if necessary, 
we may assume that $\phi$ is surjective in 
codimension $1$.

By the paragraph after \cite{HK}~ Corollary~2.4, $X$ is a Mori dream space.
By Definition~1.10 of loc. cit., all birational maps $\phi$ from $X$ 
which are surjective in codimension $1$ are obtained in the following way:
there exist finitely many birational
maps $f_i: X \dashrightarrow X_i$ corresponding to the 
chambers of the movable cone $\text{Mov}(X)$, 
and $\phi$ coincides with the composition of $f_i$ and a morphism from $X_i$ 
corresponding to one of the finitely many faces of the nef cone 
$\text{Nef}(X_i)$.
Moreover all of them are toric maps and morphisms to toric varieties.
Therefore we proved the finiteness of the birational maps $\phi$.
\end{proof}

\begin{Rem}
More generally, the same statement with the same proof holds if 
$K_X$ or $-K_X$ supports a big divisor and 
$X$ as well as its crepant blowing-ups are Mori dream spaces .
\end{Rem}

Graduate School of Mathematical Sciences, University of Tokyo,
Komaba, Meguro, Tokyo, 153-8914, Japan 

Department of Mathematics, Faculty of Science, King Abdulaziz University,  
P. O. Box 80257, Jeddah 21589, Saudi Arabia

kawamata@ms.u-tokyo.ac.jp

\end{document}